\documentclass[plain]{amsart}
\usepackage[toc,page]{appendix}
\usepackage{amssymb,graphicx,epsfig,amsxtra, amsmath}
\usepackage[]{graphics}
\usepackage{amscd} 
\usepackage{xypic}
\usepackage[mathscr]{eucal}

\usepackage{xcolor}
\usepackage{a4}
\usepackage[normalem]{ulem}
\usepackage{paralist}
\usepackage{comment}
\input{xy}
\xyoption{all}
\newtheorem{theorem}{\textbf{Theorem}}[section]

\newtheorem{lemma}[theorem]{\textbf{Lemma}}
\newtheorem{corollary}[theorem]{\textbf{Corollary}}

\theoremstyle{definition}

\newtheorem{remark}[theorem]{Remark}

\newtheorem{definition-remark}{Definition-Remark}

\def\ag{\`a}

\def\geq{\geqslant}
\def\leq{\leqslant}

\newcommand{\sM}{{\mathcal M}}

\newcommand{\sT}{{\mathcal T}}

\newcommand{\PP}{\ensuremath{\mathbb{P}}}

\newcommand{\CC}{\ensuremath{\mathbb{C}}}

\newcommand{\NN}{\ensuremath{\mathbb{N}}}
\newcommand{\hol}{\ensuremath{\mathcal{O}}}

\newcommand\de{\delta}

\begin{document}

\title[A remark on  isolated complex hypersurface singularities]
{A remark on  isolated complex hypersurface singularities}


\author{ Fabrizio Catanese }
\address{Mathematisches Institut\\
Fakult\"at f\"ur Mathematik, Physik und Informatik\\
Universit\"at Bayreuth
}
\email{fabrizio.catanese@uni-bayreuth.de}

\author
{Ciro Ciliberto}
\address{Dipartimento di Matematica\\
Universit\`a di Roma ``Tor Vergata''\\
 Via della Ricerca Scientifica\\00133 Roma, Italy.}
\email{cilibert@mat.uniroma2.it}

\author{Concettina Galati }
\address{Dipartimento di Matematica\\
Universit\ag\, della Calabria\\
via P. Bucci, cubo 31B\\
87036 Arcavacata di Rende (CS), Italy. }
\email{concettina.galati@unical.it }

\subjclass{14B05, 14J17, 32S15}

\keywords{Hypersurfaces singularities}

\date{}

\dedicatory{}

\commby{}


\begin{abstract}  This is now an expository note about the following classical problem.  Let $(X, \bf 0)$ be the germ of a hypersurface in $(\mathbb C^n,\bf 0)$ with an ordinary singularity of multiplicity $m$ at the origin $\bf 0$.  A natural question  to ask is whether $X$ and its tangent cone at the origin are analytically isomorphic. The answer is negative  in general, in view of a theorem of Kioji Saito.  

 However 
 there is an integer $D(n,m)>m$ such that, given a \emph{regular} homogeneous polynomial $f(x_1,\ldots, x_n)$ of degree $m$
 (this means that $\{ f=0\}$   is  a  smooth  hypersurface in $\PP^{n-1}$) then,  for all $d\geq D(n,m)$,  any convergent  power series of the form $g=f+ o(d)$ (here, as usual,  $o(d)$ stays for a power series  of order at least $d$), 
 defines a germ  $\{ g=0\}$ which is analytically equivalent to the germ $\{ f=0\}$. In this note we  compute $D(n,m)$
 explicitly as $n(m-2)+1$.  We also give an extension to the case in which $f$ is a quasihomogeneous polynomial.
  It was pointed out that the value of $D(n,m)$ was already known by \cite[Exercise 7.31]{D}. 
\end{abstract}


\maketitle

\tableofcontents

\section{Introduction} 
Let $(X,p)$ be the  germ of an isolated   hypersurface singularity in $\mathbb C^n$, such that the tangent cone $C_{X,p}$ of $X$ at $p$ is the cone over a smooth hypersurface of degree $m\geq 2$ in $\mathbb P^{n-1}$, where $n\geq 2$. Then we shall say that $p$ is an \emph{ordinary singularity of multiplicity $m$} or an \emph{ordinary $m$--tuple point} of $X$.  A natural question  to ask  is  whether $(X,p)$ and $(C_{X,p},p)$ are analytically isomorphic. It is well known that this is always the case if $m=2$.  The  answer is also affirmative  if $m=n=3$  \cite [Prop. 3.3]{CG},  and  if $3\leq m\leq 4$ and $n=2$ (see \cite {F}). 

Despite this, the answer is, as we will see, negative in general. However, by the \emph{finite determinacy theorem} (see \cite [Thm. 1.2.23, 2nd ed.] {GLS}   and also  \cite[Ch. 6]{D}), the following holds. 

Say that a homogeneous polynomial   $f(x_1,\ldots, x_n)\in \mathbb C[x_1,\ldots, x_n]$
of degree $m$  is \emph{regular} if  the hypersurface of $\mathbb P^{n-1}$ with equation $f=0$ is smooth. 

Then there is an integer $D(n,m)>m$ such that, for all such regular polynomials $f$ and for all $d\geq D(n,m)$, the following happens: if we consider a convergent  power series of the form $g=f+ o(d)$, where $o(d)$ stays for a power series  of order at least $d$, then the germ at the origin defined by $g=0$ is analytically equivalent to the germ defined by $f=0$. 

In this note we will compute $D(n,m)$ for $ m \geq 3$ (the case $m=2$ is known), thus showing that: 
$$
D(n,m)= \left\{
\begin{array} {l} 
3, \,\, \text{if $m=2$}
\\
4, \,\, \text{if $n=2, m=3$}
\\
n(m-2)+1, \,\, \text{otherwise}
\end{array}
\right.
$$
(see Corollary   \ref {cor:trip}, Theorem \ref {thm:main}  and Theorem \ref{thm:sharp}).   
And we shall show that our result is the best possible,  producing  examples, for all  regular homogeneous polynomials $f(x_1,\ldots, x_n)$ of degree $m$, of  a convergent  power series of the form $g=f+ o(d)$, with $d<D(n,m)$, such that  the germ at the origin defined by $g=0$ is not analytically equivalent to the germ defined by $f=0$.

Note that $D(n,m)=m+1$ if and only if $m=2$, $n=2$ and $3\leq m\leq 4$ and $n=m=3$. 

 In Section \ref {sec:qh} we make an extension to the case in which $f$ is a quasihomogeneous polynomial.  After posting this note on math.arxiv, we learned that the above result was already known, and in particular it appears in \cite[Exercise 7.31]{D}. 
 We finally observe that the above result  improves the bound in  \cite[Exercise 1.2.2.7, 2nd ed.]{GLS}, see also \cite[Appendix A, Example 3.5.7, Case 4, 2nd ed.]{GLS}.

\medskip

{\bf Aknowledgements:}  Thanks to  A. Dimca, D. Kerner and J. Koll\'ar for providing useful references. The second  and third authors are members of GNSAGA of INdAM. 
The third author acknowledges funding from the GNSAGA of INdAM and the European Union - NextGenerationEU under the National Recovery and Resilience Plan (PNRR) - Mission 4 Education and research - Component 2 From research to business - Investment 1.1, Prin 2022``Geometry of algebraic structures: moduli, invariants, deformations", DD N. 104, 2/2/2022, proposal code 2022BTA242 - CUP J53D23003720006.

\section{Preliminaries}

\subsection{Finite determinacy} Here we recall some definitions and results from \cite {GLS}.

We will as usual denote by $\hol_n: = \CC\{x_1,\ldots, x_n\}$ the ring of convergent power series $f(x_1,\ldots, x_n)$. 
A  polynomial  $f\in \CC[x_1,\ldots, x_n]$ is said to be \emph{quasihomogeneous} of \emph{isobaric type} $(w_1,\ldots ,w_n; d)$, with $ w_1,\ldots ,w_n \in \NN_+$, if for any monomial $x_1^{a_1}\cdots x_n^{a_n}$ appearing in $f$ one has
$$
w_1a_1+\cdots+ w_na_n=d.
$$
Homogeneous polynomials are quasihomogeneous polynomials of  isobaric type $(1,\ldots ,\\1; d)$. Quasihomogeneous polynomials verify the \emph{Euler relation}
$$
d\cdot f= \sum_{i=1}^n w_ix_i\frac{\partial f}{\partial x_i}.
$$

Consider 
the \emph{Jacobian ideal} $J(f)$ of $f$
$$
J(f) : =  \Big ( \frac {\partial f}{\partial x_1}, \ldots , \frac {\partial f}{\partial x_n}\Big ).
$$

The Euler relation implies that,  if $f$ is quasi-homogeneous, then  the \emph{Milnor Algebra} 
$$M_f : = \CC\{x_1,\ldots, x_n\} /\Big (\frac{\partial f}{\partial x_1}, \dots, \frac{\partial f}{\partial x_n}\Big) = \hol_n / J(f)$$ 
is isomorphic to the \emph{Tyurina Algebra}, i.e., the following quotient of $M_f$: 
$$\sT_f : = \hol_n / ((f) + J(f)) =  \CC\{x_1,\ldots, x_n\} /\Big (f, \ \frac{\partial f}{ \partial x_1}, \dots, \frac{\partial f}{\partial x_n}\Big).$$

\subsection{Saito's theorem} Conversely, an important theorem of 
Kioji  Saito \cite[p. 123]{Sa}  proved that the condition $ M_f  \cong \sT_f$ is also sufficient for $f$ to be quasi-homogeneous.  Since $\sT_f $ is a quotient of $M_f$, this condition is equivalent to the equality of \emph{Milnor number} $ \mu_f $ and \emph{Tyurina number} $\tau_f $, i.e.,
$$ \mu_f : = \dim_{\CC} (M_f) = 
  \dim_{\CC} (\sT_f) = : \tau_f $$ 
  
    Saito's theorem can be reformulated as follows, using the forthcoming Lemma \ref{lem: quasihomogeneous},
the Euler relation for quasihomogeneus polynomials, and the chain rule.

\begin{theorem}\label{thm:saito} Let $f\in \mathfrak m\subset  \hol_n$ such that $f=0$ defines a germ of hypersurface with an isolated singularity at the origin. Then this germ is analytically isomorphic to the germ defined by a quasihomogeneous polynomial if and only if $f\in J(f)$, equivalently,  if and only if  $M_f  \cong \sT_f$.
\end{theorem}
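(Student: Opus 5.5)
The plan is to prove the theorem as a reformulation of Saito's result $M_f\cong\sT_f \Rightarrow$ quasihomogeneity. I will use two facts. The first is the forthcoming Lemma \ref{lem: quasihomogeneous}, which I take to say that a quasihomogeneous $f$ with isolated singularity may be assumed to be a polynomial, with weights normalised so that the Euler relation holds. The second is the transformation behaviour of the ideal $(f)+J(f)$ under coordinate changes and multiplication by units.

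\emph{Step 1: the two conditions are equivalent.} Since $\sT_f$ is the quotient of $M_f$ by the image of $f$, the surjection $M_f\to\sT_f$ is an isomorphism if and only if $f\in J(f)$. Both algebras are finite dimensional because the singularity is isolated, so an abstract isomorphism $M_f\cong\sT_f$ forces $\mu_f=\tau_f$. Equality of dimensions in turn forces the natural surjection to be injective. Hence $f\in J(f)\iff M_f\cong\sT_f\iff\mu_f=\tau_f$.

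\emph{Step 2: the ``only if'' direction.} Suppose $\{f=0\}$ is analytically isomorphic to $\{q=0\}$ with $q$ quasihomogeneous. Then there are a biholomorphism $\phi$ of $(\CC^n,\mathbf 0)$ and a unit $u$ with $f=u\cdot(q\circ\phi)$. The Euler relation gives $q\in J(q)$. The chain rule gives $J(q\circ\phi)=\phi^*J(q)$, because the Jacobian matrix of $\phi$ is invertible. Differentiating $f=u\,(q\circ\phi)$ gives $(f)+J(f)=(q\circ\phi)+J(q\circ\phi)$. So the Tyurina ideal is a contact invariant, and hence
$$\tau_f=\tau_{q\circ\phi}=\tau_q=\mu_q.$$
The Milnor number is a topological invariant of the hypersurface germ and is independent of the chosen equation; alternatively, $\mu$ is invariant under multiplication by a unit. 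Thus $\mu_f=\mu_q$, so $\mu_f=\tau_f$, and by Step 1 we get $f\in J(f)$.

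\emph{Step 3: the ``if'' direction.} Assume $f\in J(f)$. By Step 1 this means $\mu_f=\tau_f$. Saito's theorem \cite{Sa} then says that after a holomorphic change of coordinates $f$ becomes quasihomogeneous as a convergent power series. Lemma \ref{lem: quasihomogeneous} lets us take this quasihomogeneous representative to be a polynomial. Therefore the germ of $f$ is isomorphic to the germ of a quasihomogeneous polynomial.

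The main obstacle is entirely inside Saito's theorem, which we invoke rather than reprove. Saito writes $f=\sum a_i\,\partial_i f$ and linearises the vector field $\sum a_i\partial_i$, which is possible because it has positive rational eigenvalues, with no resonance obstruction after a suitable normalisation. The only delicate bookkeeping on our side is Step 2's invariance of $\mu$ under multiplication by a unit. If one prefers not to quote topological invariance, one can instead use that $\tau_f$ depends only on the germ and that $\mu_f=\tau_f$ is what we need. Observe that $u\,q\in J(u\,q)$ follows directly from $q\in J(q)$, via the standard argument that $u^{-1}$ can be absorbed using $q\in\mathfrak m J(q)$, which holds because the weights are positive. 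This yields the ``only if'' direction purely algebraically.
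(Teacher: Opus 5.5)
Your proposal is correct. The ``if'' direction matches the paper: Saito's theorem already gives coordinates in which $f$ is a quasihomogeneous polynomial, so the two germs are isomorphic.

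The ``only if'' direction takes a different route. The paper goes through Lemma \ref{lem: quasihomogeneous}. Because $q$ is quasihomogeneous, the contact equivalence $f=u\cdot(q\circ\phi)$ can be upgraded to a right equivalence $f=q\circ\psi$; the unit is absorbed by a weighted rescaling $x_i\mapsto v^{w_i}x_i$, where $v^d$ is the unit. The Euler relation and the chain rule then give $f\in J(f)$ immediately. You never upgrade to a right equivalence and work with $f=u\cdot(q\circ\phi)$ directly, which leaves you two options:
\begin{itemize}
\item Option (a) uses contact invariance of $\tau$ together with contact invariance of $\mu$. It is valid, but $\mu(uh)=\mu(h)$ is a genuinely nontrivial fact, usually proved topologically, and is more than the statement needs.
\item Option (b) is the better one. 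You only gesture at it, so write it out, and apply it to $h=q\circ\phi$ rather than to $q$. The Euler relation and the chain rule give $h=\sum_i a_i\,\partial_i h$ with all $a_i\in\mathfrak m$. Hence
\[
\sum_i a_i\,\partial_i(uh)=\bigl(u+\sum_i a_i\,\partial_i u\bigr)h ,
\]
and the bracket is a unit, so $uh\in\mathfrak m J(uh)$.
\end{itemize}
Option (b) shows that the property $h\in\mathfrak m J(h)$ is preserved under contact equivalence in general, and it makes Lemma \ref{lem: quasihomogeneous} unnecessary here. The paper's route instead yields the slightly stronger by-product that $f$ is right equivalent to $q$.

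You also misread Lemma \ref{lem: quasihomogeneous}. It says that when $f$ is quasihomogeneous, right equivalence and contact equivalence of $f$ and $g$ coincide; it says nothing about taking representatives to be polynomials. This is harmless: a power series that is quasihomogeneous with positive weights is automatically a polynomial, and Saito's theorem already produces one.
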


\subsection{Right equivalence} We  now  define an equivalence relation $\sim$ in $ \hol_n$  (called {\it right equivalence} in \cite[Def. 1.2.9, 2nd ed.]{GLS}). Namely, given $f,g\in 
 \hol_n$, we will define $f\sim g$ if and only if there exists an automorphism $\phi$
of $ \hol_n$ such that $$\phi(f)  :=f\circ \phi = g.$$  In general, the right equivalence between $f$ and $g$ implies that  the germs of hypersurfaces defined  by $f=0$ and $g=0$ at the origin are analytically equivalent (in that case one says that $f$ and $g$ are contact equivalent \cite[Rmk. 1.2.9.1 and Def. 1.2.9, 2nd ed.]{GLS}).

\begin{lemma}[Lemma 1.2.13 and Definition 1.2.9 of 2nd ed. \cite{GLS}]\label{lem: quasihomogeneous}
If $f, g \in  \hol_n$ and $f$ is a quasihomogeneous polynomial, then  $f\sim g$ if and only if the germs defined by $f=0$ and $g=0$ at the origin are analytically equivalent.
\end{lemma}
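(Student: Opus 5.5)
The direction ``$f\sim g$ implies the germs are equivalent'' was already noted above: if $g=f\circ\phi$, then $\phi$ maps $\{g=0\}$ isomorphically onto $\{f=0\}$. So the work lies in the converse. Suppose the germs $\{f=0\}$ and $\{g=0\}$ are analytically equivalent, with $f$ quasihomogeneous of type $(w_1,\ldots,w_n;d)$. Then there is an automorphism $\phi$ of $\hol_n$ and a unit $u\in\hol_n^*$ with $g = u\cdot(f\circ\phi)$. This is the statement that the ideals $(g)$ and $(f\circ\phi)$ coincide, which holds because the germs are reduced and defined by these generators. Replacing $f$ by $f':=f\circ\phi$, it suffices to show that $u f'\sim f'$ for every unit $u$. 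Note that $f'$ is right equivalent to $f$, so it satisfies a transformed Euler relation
\[
f'=\sum_i a_i\,\frac{\partial f'}{\partial x_i},\qquad a_i\in\mathfrak m ,
\]
obtained from $d f=\sum w_i x_i\,\partial f/\partial x_i$ by the chain rule. Equivalently, one can conjugate back and prove $uf\sim f$ for the quasihomogeneous $f$ itself; I would work with $f$ directly.

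First, reduce to $u(0)=1$. Write $u=c\,u_1$ with $c=u(0)\neq0$ and $u_1(0)=1$. Choose $\lambda\in\CC^*$ with $\lambda^d=c$. The weighted scaling $x_i\mapsto \lambda^{w_i}x_i$ sends $f$ to $\lambda^d f=cf$, so $cf\sim f$. Then $cu_1 f\sim u_1' f$, where $u_1'$ is $u_1$ composed with the scaling, and $u_1'$ is still a unit with value $1$ at the origin.

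Next, take a root. Since $u_1(0)=1$, there is a convergent $v=u_1^{1/d}$ with $v(0)=1$. Consider the map $\psi(x)=(v(x)^{w_1}x_1,\ldots,v(x)^{w_n}x_n)$. By quasihomogeneity,
\[
f\circ\psi = v^d f = u_1 f .
\]
The Jacobian of $\psi$ at $0$ is the identity because $v(0)=1$, so $\psi$ is an automorphism of $\hol_n$ and $u_1f\sim f$. Combining the two steps gives $uf\sim f$, and hence $g=u\,(f\circ\phi)\sim f\circ\phi\sim f$.

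The expected main obstacle is organizing the composition correctly. The unit multiplies $f\circ\phi$, not $f$, so one must rewrite $g=u\,(f\circ\phi)=\big((u\circ\phi^{-1})\,f\big)\circ\phi$ and apply the two steps to the unit $u\circ\phi^{-1}$. Once this is done, the argument uses only the explicit weighted dilation and works with no convergence issues. This avoids any path-integration (Moser-type) argument, which would be the fallback for non-quasihomogeneous $f$.
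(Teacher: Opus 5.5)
Your argument is correct and is the standard one. The paper gives no proof of its own, only the reference to Lemma~1.2.13 of \cite{GLS}, and the usual proof of that lemma is exactly yours: take a $d$-th root $v$ of the unit, use the weighted dilation $\psi(x)=(v^{w_1}x_1,\ldots,v^{w_n}x_n)$ so that $f\circ\psi=v^d f$, and transport the unit through $\phi$ as you do. (Your preliminary reduction to $u(0)=1$ is harmless but unnecessary, since every unit of $\hol_n$ already has a $d$-th root.)

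The one point to repair is how you justify $g=u\cdot(f\circ\phi)$. Reducedness is not a hypothesis: for instance $f=x_1^2$ is quasihomogeneous. Moreover, if ``analytically equivalent'' referred to the reduced zero sets, the lemma would actually fail, e.g.\ for $f=x_1$, $g=x_1^2$. You should instead read it as contact equivalence, as the paper does just before the lemma. That means an isomorphism $\hol_n/(f)\cong\hol_n/(g)$, which lifts to an automorphism $\phi$ of $\hol_n$ with $(f\circ\phi)=(g)$, and hence $g=u\cdot(f\circ\phi)$.
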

  \color{black}

Given $f\in  \hol_n$ and given a non--negative integer $k$ we define the \emph{$k$--truncation} $f^{(k)}$ of $f$ to be
$$
f^{(k)} : \text{the image of $f$ in} \,  \hol_n /\mathfrak m^{k+1}
$$
where $\mathfrak m$ is the maximal ideal of $ \hol_n$. 

We will say that $f\in  \hol_n$ is    \emph{$k$--determined} (\emph{right $k$--determined} in \cite [Def. 1.2.21, 2nd ed.]{GLS}) if for each $g\in  \hol_n$ with $f^{(k)} = g^{(k)}$ we have $f\sim g$. 

 Notice that, if $f$ is a homogeneous polynomial of degree $m$ that is $m$--determined, then any germ of hypersurface in $\mathbb C^n$ at the origin whose tangent cone  is  defined by $f=0$, is analytically isomorphic to  its  tangent cone. 

The following is the \emph{Finite Determinacy Theorem} from \cite [Thm. 1.2.23, 2nd ed \color{blue}] {GLS}:

\begin{theorem}\label{thm:fdt} Let $f\in \mathfrak m\subset  \hol_n$. Then $f$ is  $k$--determined if
\begin{equation}\label{eq:worp}
\mathfrak m^{k+1} \subseteq \mathfrak m^2 \cdot J(f).
\end{equation}

\end{theorem}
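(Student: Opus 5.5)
The plan is the classical Mather–Tougeron homotopy method. Fix $g\in\hol_n$ with $g^{(k)}=f^{(k)}$, so $h:=g-f\in\mathfrak m^{k+1}$, and set $F_t=f+th$ for $t\in\CC$. The aim is to show that for every $t_0\in[0,1]$ (indeed on a neighbourhood of the segment in $\CC$) the germs $F_t$ and $F_{t_0}$ are right equivalent for $t$ near $t_0$. Compactness and connectedness of $[0,1]$ then give $f=F_0\sim F_1=g$.

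The local step is to find a time-dependent vector field. Concretely, I want germs $\xi_i(x,t)\in\mathfrak m\cdot\CC\{x,t-t_0\}$, $i=1,\dots,n$, such that
$$ h(x)=-\sum_{i=1}^n \xi_i(x,t)\,\frac{\partial F_t}{\partial x_i}(x). $$
Given such $\xi$, integrate the ODE $\dot\Phi=\xi(\Phi,t)$ with $\Phi_{t_0}=\mathrm{id}$. Since $\xi(0,t)=0$, the flow fixes the origin and exists for $t$ near $t_0$ on a neighbourhood of $0$. Differentiating then gives $\frac{d}{dt}F_t(\Phi_t(x))=h+\sum\xi_i\,\partial_iF_t=0$. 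So $F_t\circ\Phi_t=F_{t_0}$, and $\Phi_t$ is an automorphism of $\hol_n$, as required.

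To produce $\xi$, put $R=\CC\{x,t-t_0\}$ and $\mathfrak m_x=(x_1,\dots,x_n)R$. I first show $\mathfrak m_x^{k+1}\subseteq \mathfrak m_x^2 J_x(F_t)R$, where $J_x$ denotes the $x$-partials. We have $\partial_iF_t=\partial_if+t\partial_ih$ with $\partial_i h\in\mathfrak m_x^{k}$. From \eqref{eq:worp}, $\mathfrak m_x^{k+1}\subseteq \mathfrak m_x^2J(f)R$, and
$$\mathfrak m_x^2 J(f)R\subseteq \mathfrak m_x^2J_x(F_t)R+\mathfrak m_x^{k+2}\subseteq \mathfrak m_x^2J_x(F_t)R+\mathfrak m_{R}\cdot\mathfrak m_x^{k+1},$$
with $\mathfrak m_R=(x,t-t_0)$. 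The last inclusion holds because $\mathfrak m_x^{k+2}\subseteq \mathfrak m_x\cdot\mathfrak m_x^{k+1}$.

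Nakayama's lemma applied to the finitely generated $R$-module $\mathfrak m_x^{k+1}$ modulo the submodule $\mathfrak m_x^{k+1}\cap \mathfrak m_x^2J_x(F_t)R$ now gives $\mathfrak m_x^{k+1}\subseteq\mathfrak m_x^2J_x(F_t)R$. Since $h\in\mathfrak m_x^{k+1}$, we can write $h=-\sum\xi_i\partial_iF_t$ with $\xi_i\in\mathfrak m_x^2R\subseteq \mathfrak m_x R$.

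The main obstacle is this Nakayama step: I must check that the error term really lands in $\mathfrak m_R$ times the module. It does, because the perturbation $t\,\partial_i h$ is multiplied by $\mathfrak m_x^2$ and so has $x$-order at least $k+2$. Working over the ring $R$, with $t$ as a variable, rather than at a fixed $t$, is what makes the resulting $\xi_i$ analytic in $t$. Analyticity in $t$ is exactly what the ODE step needs.
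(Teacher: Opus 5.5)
Your argument is correct. The paper gives no proof of its own: it quotes this result from \cite{GLS} (Thm.~1.2.23, 2nd ed.). Your route is the standard proof given there. It consists of the homotopy $F_t=f+t(g-f)$, then the Nakayama step giving $\mathfrak m_x^{k+1}\subseteq \mathfrak m_x^2 J_x(F_t)$ over $\CC\{x,t-t_0\}$, then integrating the resulting vector field, which vanishes at the origin, and finally a connectedness argument on $[0,1]$.
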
 

 When $f$ is homogeneous,  we note that  a necessary condition to apply this theorem is that $f$  be regular, otherwise $ \mathfrak m^2 \cdot J(f)$ cannot contain any power of $\mathfrak m$.

\begin{corollary}\label{determined}
If $f\in \hol_n$ is a homogeneous polynomial of degree $m$, and $ k \geq m$, then 
 $f$ is  $k$--determined if
\begin{equation}\label{eq:worp}
\mathfrak m^{k+1} \subseteq  J(f).
\end{equation}

\end{corollary}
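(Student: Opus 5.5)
We deduce Corollary \ref{determined} from Theorem \ref{thm:fdt} by proving that the hypothesis $\mathfrak m^{k+1}\subseteq J(f)$ with $k\geq m$ implies $\mathfrak m^{k+1}\subseteq \mathfrak m^2\cdot J(f)$. Since $f$ is homogeneous of degree $m$, the partial derivatives $\partial f/\partial x_i$ are homogeneous of degree $m-1$. Hence $J(f)$ is a homogeneous ideal generated in degree $m-1$. One may work in the polynomial ring and then pass to $\hol_n$, since all ideals involved are generated by polynomials and $\hol_n$ is faithfully flat over the localization of $\CC[x_1,\ldots,x_n]$ at the origin. The key observation is a degree comparison. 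Because $k+1\geq m+1 = (m-1)+2$, every element of $J(f)$ of order at least $k+1$ should already lie in $\mathfrak m^2 J(f)$.

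It suffices to treat the monomial generators of $\mathfrak m^{k+1}$, which are homogeneous of degree $k+1$. Take such a monomial $x^a$ with $|a|=k+1$. By hypothesis $x^a\in J(f)\hol_n$, so $x^a=\sum_i h_i\,\partial f/\partial x_i$ with $h_i\in\hol_n$. Comparing homogeneous components of degree $k+1$ on both sides, we may replace each $h_i$ by its homogeneous part of degree $k+1-(m-1)=k+2-m$. This works because the $\partial f/\partial x_i$ are homogeneous of degree $m-1$, so the component of degree $k+1$ of $\sum h_i\,\partial f/\partial x_i$ only involves the component of $h_i$ of degree $k+2-m$.

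Since $k\geq m$, we have $k+2-m\geq 2$, so these homogeneous parts lie in $\mathfrak m^2$. Therefore $x^a\in\mathfrak m^2 J(f)$, which gives $\mathfrak m^{k+1}\subseteq\mathfrak m^2 J(f)$. Theorem \ref{thm:fdt} then yields that $f$ is $k$--determined.

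The only delicate point is the homogeneous-component argument in $\hol_n$, whose elements are power series rather than polynomials. Taking the degree $k+1$ component of a convergent power series identity is nevertheless legitimate, since each homogeneous component is a polynomial and the product of a series with a homogeneous polynomial decomposes degree by degree. So this step is routine rather than a real obstacle.
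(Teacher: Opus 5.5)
Your proposal is correct and follows essentially the same route as the paper. Both reduce to degree-$(k+1)$ generators of $\mathfrak m^{k+1}$, take the degree-$(k+1)$ homogeneous component of $F=\sum g_i\,\partial f/\partial x_i$ so that each $g_i$ becomes homogeneous of degree $k+2-m\geq 2$, and conclude $\mathfrak m^{k+1}\subseteq\mathfrak m^2 J(f)$ before applying Theorem \ref{thm:fdt}. Your faithful-flatness remark is not needed, since the component argument works directly in $\hol_n$, as you yourself note.
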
 

\begin{proof}
It suffices to show that every homogeneous polynomial $F$ of degree $k+1$ is contained in $ \frak m^2 \cdot J(f)$.
If $F \in J(f)$ and \eqref {eq:worp} holds, then there are $g_1, \dots, g_n\in \hol_n$ such that $ F = \sum_{i=1}^n g_i  \frac {\partial f}{\partial x_i}$,
and passing to the homogeneous part of degree $k+1$, we can take $g_i$ to be a homogeneous 
polynomial of degree $ k+1 - (m-1) = (k-m) + 2 \geq 2$.

Hence $g_i \in  \frak m^2$ for all $1\leq i\leq n$. \end{proof}

Note that \eqref {eq:worp} implies that $f$ is regular. 

Let us denote by 
$$\CC[x_1,\ldots, x_n]_d$$ the vector space of the homogeneous polynomials of degree $d$, which 
has  dimension
$$
N(n,d)= {{d+n-1}\choose {n-1}}.
$$

 As immediate consequences of Theorem \ref {thm:fdt} we have first of all that any regular homogeneous polynomial of degree $2$  is 2--determined.  This is usually derived from the following elementary \emph{Generalized Morse Lemma}  or \emph{Splitting Lemma} (see for instance \cite{Vorlesung}):
  
 \begin{lemma}
  Let $f\in \mathfrak m^2\subset  \hol_n$  with a non--zero homogeneous degree 2 part. Then there are new  coordinates $(z_1, \dots, z_n)$ such that 
 $$ f = z_1 ^2 + \cdots + z_h^2 + F(z_{h+1}, \dots, z_n),$$
 where $h$ is the rank of the Hessian matrix at the origin $0$, and $F\in \mathfrak m^3\subset \mathbb C\{z_{h+1}, \dots, z_n\}$. 
 
 \end{lemma}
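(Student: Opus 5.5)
We prove the Splitting Lemma by induction on the rank $h$ of the Hessian, splitting off one square at a time by completing the square with convergent power series, i.e.\ by a parametric version of the Morse lemma.

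\emph{Base step.} Let $q$ be the degree $2$ part of $f$. It is a quadratic form of rank $h\geq 1$. By a linear change of coordinates we may assume $q = x_1^2+\cdots+x_h^2$. Then
\[
f = x_1^2 + x_1 A + B,
\]
where $A\in \mathfrak m^2$ and $B\in \mathfrak m^2$. This decomposition is obtained as follows: collect all monomials containing $x_1$. Those of order $\geq 3$ are written as $x_1$ times a series in $\mathfrak m^2$, or as $x_1^2$ times a series in $\mathfrak m$. The only degree $2$ monomial involving $x_1$ is $x_1^2$ itself. More precisely, we will use the Weierstrass division theorem, or just formal grouping with convergence evident, to write
\[
f = x_1^2\,u(x) + x_1\, a(x_2,\dots,x_n) + b(x_2,\dots,x_n),
\]
with $u$ a unit, $u(0)=1$, $a\in \mathfrak m^2$ and $b\in\mathfrak m^2$.

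Since $u(0)=1$, it has a holomorphic square root. Set $y_1 = x_1\sqrt u + a/(2\sqrt u)$. This is a coordinate change, since its linear part is $x_1$ because $a\in\mathfrak m^2$. Completing the square gives
\[
f = y_1^2 + b - a^2/(4u).
\]
The remainder still depends on $x_1$ through $u$. To avoid this, apply the Weierstrass preparation theorem to $\partial f/\partial x_1 = 2x_1 + \ldots$, which has a simple zero in $x_1$. By the implicit function theorem there is a unique $x_1=\varphi(x_2,\dots,x_n)$ with $\varphi\in \mathfrak m^2$ and $\partial f/\partial x_1 (\varphi, x') =0$. Substitute $x_1 = \varphi + t$. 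Then $f = f(\varphi,x') + t^2 v(t,x')$ with $v$ a unit, $v(0)=1$, by Taylor's formula in $t$ with no linear term. Setting $z_1 = t\sqrt v$ yields
\[
f = z_1^2 + f_1(x_2,\dots,x_n).
\]
This parametric Morse step is the core, and it is the step I would write carefully. The key points are convergence, which is handled by the implicit function theorem, and the fact that the new coordinates have identity linear part in the $x_1$-direction.

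\emph{Inductive step.} Now $f_1$ has degree $2$ part $x_2^2+\cdots+x_h^2$, of rank $h-1$. This follows because the coordinate change only altered terms of order $\geq 3$ modulo the $x_1$-direction, and $\varphi\in\mathfrak m^2$ does not affect the quadratic part. If $h-1\geq 1$ we iterate in the variables $x_2,\dots,x_n$, which leaves $z_1$ untouched. After $h$ steps we get
\[
f = z_1^2+\cdots+z_h^2+F(z_{h+1},\dots,z_n).
\]
Here $F$ has zero quadratic part, since the Hessian rank is an invariant and equals $h$, so $F\in\mathfrak m^3$.

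The only real obstacle is bookkeeping: we must check that each step preserves the quadratic forms of the remaining variables. This holds because every substitution is the identity modulo $\mathfrak m^2$.
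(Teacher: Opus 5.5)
Your proposal is correct. The paper gives no proof of this lemma of its own: it quotes the Splitting Lemma as elementary and points to the literature. What you give is the standard argument.

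The steps that carry the argument are these:
\begin{itemize}
\item After diagonalizing the quadratic part, $\partial f/\partial x_1 = 2x_1 + (\text{terms in } \mathfrak m^2)$. The implicit function theorem then yields $\varphi(x') \in \mathfrak m^2$ with $\partial f/\partial x_1(\varphi(x'),x') \equiv 0$.
\item The translation $x_1 = \varphi(x') + t$ makes $f - f(\varphi(x'),x')$ divisible by $t^2$. The cofactor $v$ satisfies $v(0)=1$, so $z_1 = t\sqrt{v}$ is a valid coordinate.
\item Since $\varphi \in \mathfrak m^2$, the remainder $f(\varphi(x'),x')$ has quadratic part $x_2^2+\cdots+x_h^2$, which drives the induction.
\end{itemize}

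When you write it up, delete the first attempt with $y_1 = x_1\sqrt{u} + a/(2\sqrt{u})$ and the mention of Weierstrass preparation. As you note yourself, that completion of the square leaves a remainder still depending on $x_1$, and it is never used afterwards. The implicit function theorem alone is what the proof needs.
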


\begin{corollary}\label{cor:trip} Any regular homogeneous polynomial of degree $3$ in $\mathbb C\{x_1, x_2\}$ is 3--determined. Hence any germ of curve with an ordinary triple point is analytically isomorphic to its tangent cone.
\end{corollary}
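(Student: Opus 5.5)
The strategy is to apply Corollary \ref{determined} with $n=2$, $m=3$, $k=3$. That reduces the first claim to the inclusion $\mathfrak m^{4}\subseteq J(f)$ for a regular binary cubic $f$. The second claim then follows at once. If a plane curve germ has an ordinary triple point with tangent cone $f=0$, its equation is $g=f+o(4)$, so $g^{(3)}=f^{(3)}$ and $3$--determinacy gives $g\sim f$. Right equivalence gives analytic equivalence of the germs.

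Since $f$ is homogeneous, the inclusion can be checked degree by degree. Write $f_1=\partial f/\partial x_1$ and $f_2=\partial f/\partial x_2$; these are binary quadrics. Regularity of $f$ means $f=0$ is three distinct points of $\PP^1$. Via the Euler relation $3f=x_1f_1+x_2f_2$, this is equivalent to $f_1,f_2$ having no common zero in $\PP^1$, so they form a regular sequence in $\CC[x_1,x_2]$. The Koszul complex then gives the Hilbert series of $\CC[x_1,x_2]/(f_1,f_2)$ as $(1-t^2)^2/(1-t)^2=(1+t)^2=1+2t+t^2$. Hence the quotient vanishes in degrees $\ge 3$, which gives $\mathfrak m^3\subseteq J(f)$ in the polynomial ring. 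This holds a fortiori in $\hol_2$, where it also holds for power series because every element of $\mathfrak m^3$ is a combination of the degree-$3$ monomials with coefficients in $\hol_2$. In particular $\mathfrak m^4\subseteq J(f)$.

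As a more hands-on alternative to the Koszul computation, one can use that a regular binary cubic is $GL_2$-equivalent to $x_1^3+x_2^3$ (three distinct roots, moved to the cube roots of $-1$). Then $J(f)=(x_1^2,x_2^2)$, which visibly contains all monomials of degree $\ge 3$. The inclusion is invariant under linear coordinate changes, so this settles the general case.

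The only point needing care is the condition $k\ge m$ in Corollary \ref{determined}, but $k=m=3$ satisfies it, so there is no real obstacle. The main content is the regular-sequence and Hilbert-function step, and the normal form above makes it trivial.
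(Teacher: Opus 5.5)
Your proposal is correct and follows essentially the paper's route: apply Corollary \ref{determined} with $k=m=3$, and use exactness of the Koszul complex of the regular sequence $\partial f/\partial x_1,\partial f/\partial x_2$ for the needed dimension count. The only difference is bookkeeping. The paper counts the image of $\mathbb C[x_1,x_2]_2\otimes J(f)_2\to\mathbb C[x_1,x_2]_4$ as $6-1=5$, whereas you read off the Hilbert series $1+2t+t^2$ of the Milnor algebra and get the slightly stronger $\mathfrak m^3\subseteq J(f)$; you also spell out the ``hence'' part, which the paper leaves implicit.
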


\begin{proof} Consider the multiplication map 
$$
\sM : \mathbb C[x_1,x_2]_2\otimes J(f)_2\longrightarrow \mathbb C[x_1, x_2]_4.
$$
We claim that  this map is surjective. In fact, since the two derivatives of $f$ form a regular sequence, the kernel of $\sM $ is generated by the only Koszul syzygy of the derivatives, hence 
$$
\dim ({\rm Im}( \sM))=3\cdot 2-1=5=\dim (\mathbb C[x_1, x_2]_4).
$$
This implies that   \eqref {eq:worp}  holds for $k=3$.
\end{proof}

More generally one has the following result  due to Tougeron \cite{T},  implying that every isolated singularity is finite determined. 

\begin{corollary}[Cor. 1.2.24 of the second edition of \cite{GLS}  and Sect. 6.3 of \cite{AGLV}]\label{cor:milnor}
Any germ of an isolated singularity with Milnor number $\mu$ is $(\mu+1)$-determined. 
\end{corollary}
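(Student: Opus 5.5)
Proving Corollary \ref{cor:milnor} comes down to checking the inclusion \eqref{eq:worp} of Theorem \ref{thm:fdt} for $k=\mu+1$, that is,
$$\mathfrak m^{\mu+2}\subseteq \mathfrak m^2\cdot J(f).$$
Since the singularity is isolated, $\mu=\dim_{\CC}\hol_n/J(f)$ is finite, and $J(f)\subseteq\mathfrak m$ (we may assume $f\in\mathfrak m^2$; if $f$ has a nonzero linear part the germ is smooth and $f$ is $1$-determined by the implicit function theorem). The plan has two steps: first show $\mathfrak m^{\mu}\subseteq J(f)$, then multiply by $\mathfrak m^2$.

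\textbf{First step: $\mathfrak m^{\mu}\subseteq J(f)$.} Consider the descending chain of ideals in the Artinian local ring $A=\hol_n/J(f)$:
$$A=\bar{\mathfrak m}^0\supseteq \bar{\mathfrak m}^1\supseteq\cdots,$$
where $\bar{\mathfrak m}$ is the image of $\mathfrak m$. If $\bar{\mathfrak m}^i=\bar{\mathfrak m}^{i+1}$ for some $i$, then Nakayama's lemma (the ideal is finitely generated) gives $\bar{\mathfrak m}^i=0$. So the chain strictly decreases until it reaches $0$. Each strict step drops the $\CC$-dimension by at least one, and $\dim A=\mu$. Moreover $\bar{\mathfrak m}^1\neq A$, since $J(f)\subseteq \mathfrak m$, so the first step already loses at least one dimension. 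Counting, $\bar{\mathfrak m}^{\mu}=0$, i.e.\ $\mathfrak m^\mu\subseteq J(f)$.

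\textbf{Second step.} Multiplying by $\mathfrak m^2$ gives
$$\mathfrak m^{\mu+2}=\mathfrak m^2\cdot\mathfrak m^{\mu}\subseteq \mathfrak m^2 J(f),$$
and Theorem \ref{thm:fdt} yields $(\mu+1)$-determinacy.

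\textbf{Main obstacle.} The only delicate point is the dimension count in the first step. One must check that each strict inclusion costs at least one dimension and that stabilization forces zero; this requires Nakayama over the local ring $\hol_n$ applied to the finitely generated ideal $\mathfrak m^i+J(f)$. Once that is in place, the rest is formal. Finally, since right equivalence implies analytic equivalence of the germs, this gives the statement about germs.
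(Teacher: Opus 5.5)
Your proposal is correct and is the standard argument behind the cited result; the paper gives no proof of its own and only quotes it from the literature. Nakayama's lemma and the dimension count in $\hol_n/J(f)$ give $\mathfrak m^{\mu}\subseteq J(f)$, hence $\mathfrak m^{\mu+2}\subseteq \mathfrak m^2\cdot J(f)$, and Theorem~\ref{thm:fdt} with $k=\mu+1$ concludes (your separate treatment of a nonzero linear part is harmless but unnecessary, since then $\mu=0$, $J(f)=\hol_n$ and the inclusion $\mathfrak m^{2}\subseteq\mathfrak m^2\cdot J(f)$ is trivial).
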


\subsection{A combinatorial lemma} In what follows, we will need the following lemma. Given $n,m\geq 2$ integers, set
$$
M(n,m) : =\sum_{h=1}^n (-1)^{h-1} {n \choose h} {{(n-h)(m-1)} \choose {n-1}},
$$
with the convention that ${a \choose b}=0$ if $a<b$. 

\begin{lemma}\label{lem:comb} One has
$$
M(n,m)= {{n(m-1)}\choose {n-1}}.
$$
\end{lemma}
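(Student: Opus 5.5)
Writing $k=n(m-1)$, I would prove the combinatorial identity
$$\sum_{h=0}^n (-1)^h \binom{n}{h}\binom{(n-h)(m-1)}{n-1}=0,$$
since the $h=0$ term is $\binom{n(m-1)}{n-1}$ and moving the other terms to the right-hand side gives exactly $M(n,m)=\binom{n(m-1)}{n-1}$. The convention $\binom{a}{b}=0$ for $a<b$ needs a little care. For $h\le n-1$ and $m\ge 2$ we have $(n-h)(m-1)\ge 1$, so $\binom{(n-h)(m-1)}{n-1}$ equals the polynomial $P(x)=\binom{x}{n-1}=x(x-1)\cdots(x-n+2)/(n-1)!$ evaluated at $x=(n-h)(m-1)$. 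This holds even when $0\le x<n-1$: for integers $0\le x\le n-2$ the polynomial vanishes because one of its factors is zero. For $h=n$ we have $x=0$, and $P(0)=0$ when $n\ge 2$, matching the convention $\binom{0}{n-1}=0$.

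Hence the sum equals $\sum_{h=0}^n(-1)^h\binom{n}{h}Q(n-h)$ with $Q(t)=P(t(m-1))$, a polynomial in $t$ of degree $n-1$. This is, up to sign, the $n$-th finite difference $\Delta^n Q(0)=\sum_{j=0}^n(-1)^{n-j}\binom{n}{j}Q(j)$ (substitute $j=n-h$). The $n$-th difference of a polynomial of degree $<n$ vanishes identically, since each application of $\Delta$ lowers the degree by one. So the sum is zero and the identity follows.

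The only delicate point is the one handled first: checking that the truncation convention agrees with the polynomial $\binom{x}{n-1}$ at every value $x=(n-h)(m-1)$ that occurs. It does, because all these values are nonnegative integers, and the polynomial vanishes at $0,1,\dots,n-2$.

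As a fallback, one could argue algebraically. Compare the Hilbert series of $\CC[x_1,\dots,x_n]/(\partial f/\partial x_1,\dots,\partial f/\partial x_n)$ for a regular sequence of degree $m-1$ forms, which is $\big((1-t^{m-1})/(1-t)\big)^n$, and read off its coefficients via inclusion–exclusion. The finite-difference argument is shorter, and I would use it.
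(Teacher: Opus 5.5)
Your argument is correct, but it takes a different route from the paper's. The paper gives a counting proof. It partitions a set of $n(m-1)$ objects into $n$ blocks $X_1,\dots,X_n$ of size $m-1$ and observes by pigeonhole that every $(n-1)$-subset misses at least one block. It then applies inclusion--exclusion to the events ``$A$ misses $X_j$''; the number of subsets missing a prescribed set of $h$ blocks is $\binom{(n-h)(m-1)}{n-1}$. In your notation, the full alternating sum $\sum_{h=0}^n(-1)^h\binom{n}{h}\binom{(n-h)(m-1)}{n-1}$ counts the $(n-1)$-subsets meeting all $n$ blocks, and this count is zero. You instead read the same sum as the $n$-th forward difference of the degree-$(n-1)$ polynomial $Q(t)=\binom{t(m-1)}{n-1}$. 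The equality is exact, not just up to sign, since $(-1)^{n-j}=(-1)^h$.

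The trade-off lies in the truncation convention. In the combinatorial model, $\binom{a}{b}=0$ for $a<b$ is automatic, because it is literally the number of $b$-subsets of an $a$-set. Your route needs a separate check that $x(x-1)\cdots(x-n+2)/(n-1)!$ vanishes at $x=0,\dots,n-2$, and you supply it correctly. In exchange, your argument is purely algebraic. It shows at once that $\sum_{h=0}^n(-1)^h\binom{n}{h}R(n-h)=0$ for any polynomial $R$ of degree $<n$, with no need to find a set-theoretic model for the summands.
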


\begin{proof} The proof goes by induction, and by use of  the inclusion--exclusion principle. In fact ${{n(m-1)}\choose {n-1}}$ is the number of choices of a set A of $(n-1)$ objects among $n(m-1)$ given ones. Divide the set of these objects  into $n$ disjoint subsets $X_1,\ldots, X_n$, 
each of cardinality $m-1$. 

Since $A$ has cardinality $n-1$ and we have $n$ sets $X_1,\ldots, X_n$, there is a set $X_j$, for some  $1\leq j\leq n$, 
such that $A$ has empty intersection with $X_j$.

Hence, to  choose $n-1$ objects among these we can choose  $n-1$  objects among the objects in $\cup_{i=1}^nX_i\setminus X_j$ for every $j\in \{1,\ldots, n\}$ and let $j$ vary. This gives the first summand in $M(n,m)$. However we have counted too many  $(n-1)$--tuples.  We have to subtract the $(n-1)$--tuples that are in $\cup_{i=1}^nX_i\setminus (X_j\cup X_h)$, with $j\neq h$. And so on. The assertion follows. 

\end{proof}

\section{Computation of $D(n,m)$} 

\subsection{The main theorem} Our main result is the following:

\begin{theorem}\label{thm:main} Let $f\in \hol_n$ be a regular homogeneous polynomial of degree $m\geq 3$, and exclude the case   $n=2, m=3$ in view of Corollary \ref{cor:trip}. 

Then $f$ is $n(m-2)$--determined. As a consequence we have
\begin{equation}\label{eq:kolp}
D(n,m)= m+1\,{\textrm if} \, n=m=3\, {\textrm and}\, (n,m)=(2,4),
\end{equation}
and $D(n,m)\leq n(m-2)+1$. 
\end{theorem}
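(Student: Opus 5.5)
Since $f$ is regular, the partial derivatives $f_i=\partial f/\partial x_i$, $1\le i\le n$, are homogeneous of degree $m-1$ and have the origin as their only common zero. Hence they form a regular sequence in $\CC[x_1,\ldots,x_n]$. The plan is to apply Corollary \ref{determined} with $k=n(m-2)$; this requires $k\geq m$, which holds outside the excluded cases (see below). So it suffices to prove
$$\mathfrak m^{n(m-2)+1}\subseteq J(f).$$
Because $J(f)$ is generated by homogeneous polynomials and we work in the local ring, it is enough to show that every homogeneous polynomial of degree $d\ge n(m-2)+1$ lies in the homogeneous ideal $J=(f_1,\ldots,f_n)$. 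Equivalently, the graded quotient $A=\CC[x_1,\ldots,x_n]/J$ must vanish in all degrees $\geq n(m-2)+1$.

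This is a Hilbert series computation. Since the $f_i$ form a regular sequence of $n$ forms of degree $m-1$, the Koszul complex is exact, and so
$$H_A(t)=\frac{(1-t^{m-1})^n}{(1-t)^n}=(1+t+\cdots+t^{m-2})^n.$$
This is a polynomial of degree $n(m-2)$, so $A_d=0$ for $d>n(m-2)$, and $A$ is Gorenstein with socle in degree $n(m-2)$. Alternatively, and closer to the spirit of the paper, one can compute $\dim J_d$ directly from the Koszul resolution:
$$\dim J_d=\sum_{h=1}^n(-1)^{h-1}{n\choose h}N(n,d-h(m-1)).$$
At $d=n(m-1)-n+1$, that is $d=n(m-2)+1$, this should equal $N(n,d)$ by Lemma \ref{lem:comb}, after reindexing $h\mapsto n-h$ and rewriting ${d-h(m-1)+n-1\choose n-1}$ as ${(n-h)(m-1)\choose n-1}$. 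Once one degree $d_0$ satisfies $J_{d_0}=\CC[x]_{d_0}$, all higher degrees follow, because $J$ is an ideal. This gives the inclusion, and Corollary \ref{determined} then yields $n(m-2)$-determinacy.

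It remains to check $k\ge m$, i.e. $n(m-2)\ge m$. This is equivalent to $(n-1)(m-2)\geq 2$, which fails only for $m=2$ or $(n,m)=(2,3)$, and both are excluded. For the consequence: since $g=f+o(d)$ with $d\ge n(m-2)+1$ has the same $n(m-2)$-jet as $f$, we get $g\sim f$, hence $D(n,m)\le n(m-2)+1$. For $(n,m)=(3,3)$ and $(2,4)$ we have $n(m-2)+1=m+1$. In these cases $D(n,m)>m$ together with $D\le m+1$ gives $D=m+1$. Here I read $D>m$ as the convention from the introduction; if that is not taken as given, one exhibits any $g=f+o(m)$ whose degree-$m$ part differs from $f$ and is not equivalent to it.

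The main obstacle is the bookkeeping in matching the alternating sum to Lemma \ref{lem:comb}, namely the binomial index shifts and the range of $h$ where the terms vanish. If that becomes messy, I would fall back on the cleaner Hilbert series argument, which makes the vanishing of $A$ above degree $n(m-2)$ immediate.
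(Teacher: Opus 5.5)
Your proposal is correct and follows essentially the paper's proof. You reduce via Corollary \ref{determined} to $\mathfrak m^{n(m-2)+1}\subseteq J(f)$ and deduce this from exactness of the Koszul complex. Your Hilbert series $(1+t+\cdots+t^{m-2})^n$ is a self-contained substitute for the Scheja--Storch socle citation, and your dimension count is exactly the paper's elementary argument via Lemma \ref{lem:comb}. One small remark: no reindexing $h\mapsto n-h$ is needed there (and it would actually break the match). At $d=n(m-2)+1$ one has $d+n-1=n(m-1)$, so each term already equals $\binom{(n-h)(m-1)}{n-1}$.
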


\begin{proof}  We  now prove the assertion using  Corollary  \ref {determined},  by observing that  $n(m-2)-m\geq 0$ under our hypotheses. 

It suffices to observe  that the 
 Milnor Algebra is a graded algebra of degree $ \de_f = n (m-2)$, i.e., its homogeneous summands  are zero  
in degree $> n (m-2)$. Indeed, 
 since the derivatives of $f$ form a regular sequence, the Koszul complex is exact and gives a resolution  
 of the (graded) Milnor algebra $M_f: = \hol_n / (J(f))$.
 
  Since we have a complete intersection $\Sigma$  in $\PP^{n-1}$ of type 
 $(m-1)^n$, the dualizing sheaf, by adjunction theory,  should be  $\omega_{\Sigma} \cong \hol_{\Sigma}(n(m-2))$,
 and that this is so was proven by 
Scheja and Storch who showed indeed in \cite {SS} (see also \cite [Thm. 2.3] {vSW}) that the socle of $M_f$, (graded piece of maximal degree)
 is generated by the Hessian $\varphi$ of $f$, such that 
 $$\varphi \ dx_1 \wedge \dots \wedge dx_n = d \Big ( \frac {\partial f}{\partial x_1} \Big ) \wedge \dots \wedge d \Big ( \frac {\partial f}{\partial x_n}\Big ).$$

  Hence the degree of the Milnor algebra is $n(m-2)$ and we are done.
 
 We can also give an elementary proof  checking  that  the multiplication map
$$
\sM : \CC[x_1,\ldots, x_n]_{n(m-2)-m+2}\otimes J(f)_{m-1}\longrightarrow \mathbb C[x_1,\ldots, x_n]_{n(m-2)+1}
$$
is surjective. Again  observe in fact that $n(m-2)-m+2\geq 2$ under our hypotheses. 

By the exactness of the Koszul complex, we infer that 
the kernel of $\sM  $ has dimension equal to 
$$
\sum_{h=2}^n (-1)^{h} {n \choose h} {{(n-h)(m-1)} \choose {n-1}},
$$
thus the image of $\sM  $ has dimension $M(n,m)$. So the assertion follows from  
Lemma \ref {lem:comb}. 

\end{proof}

\subsection{Sharpness}

The result in Theorem \ref {thm:main} is sharp, as the following theorem shows:

\begin{theorem}\label{thm:sharp} 
Assume that $m\geq 3$, and $n\geq 4$ if $m=3$, $n\geq 3$ if $m=4$. Then  the \emph{Fermat polynomial} 
$$
f_{n,m}(x_1,\ldots, x_n):= x_1^m+\cdots + x_n^m
$$
is not $n(m-2)-1$--determined.  In particular, 
\begin{equation}\label{eq:kolp2}
D(n,m)=n(m-2)+1.
\end{equation}

\end{theorem}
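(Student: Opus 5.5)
The natural witness is to perturb the Fermat polynomial by a multiple of the socle generator of its Milnor algebra and use Saito's theorem (Theorem \ref{thm:saito}). Write $f=f_{n,m}$ and $\de=n(m-2)$. For $f$ the Jacobian ideal is $J(f)=(x_1^{m-1},\dots,x_n^{m-1})$. The Milnor algebra $M_f$ therefore has monomial basis $x^a$ with $0\le a_i\le m-2$. Its socle, in degree $\de$, is spanned by $h:=x_1^{m-2}\cdots x_n^{m-2}$, which is (up to a constant) the Hessian. Consider
$$g:=f+h .$$
Then $g=f+o(\de)$, with $\de=n(m-2)\ge (n(m-2)-1)+1$, so $g^{(\de-1)}=f^{(\de-1)}$. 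Note that $\de>m$ under the hypotheses: $m=3$ with $n\ge4$, $m=4$ with $n\ge3$, and $m\ge5$ with $n\ge2$. It thus suffices to show that the germ $\{g=0\}$ is not analytically equivalent to $\{f=0\}$. By Lemma \ref{lem: quasihomogeneous} this is the same as $g\not\sim f$.

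I will prove this by showing that $g$ is not equivalent to any quasihomogeneous germ, i.e., by Theorem \ref{thm:saito}, that $g\notin J(g)$. First, $g$ still has an isolated singularity, since its leading form $f$ is regular; this is also clear from Theorem \ref{thm:main}. I then compute $\partial g/\partial x_i=m x_i^{m-1}+(m-2)h/x_i$. Suppose $g=\sum a_i\,\partial g/\partial x_i$ with $a_i\in\hol_n$, and compare graded pieces. In degree $m$ this forces the linear parts of the $a_i$ to satisfy $\sum \ell_i\,\partial f/\partial x_i=f$. Since the relations among the $x_i^{m-1}$ are Koszul, of degree $\ge m-1\ge2$ in the coefficients, the linear solution is unique, namely the Euler vector field $\ell_i=x_i/m$. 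Substituting $a_i=x_i/m+b_i$ with $b_i\in\mathfrak m^2$ gives
$$g-\sum \tfrac{x_i}{m}\tfrac{\partial g}{\partial x_i}=h-\tfrac{n(m-2)}{m}h=\Big(1-\tfrac{\de}{m}\Big)h ,$$
so we need $(1-\de/m)h=\sum b_i\,\partial g/\partial x_i$.

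Now I work modulo degree $>\de$ and look at degree $\de$ only. Because $b_i\in\mathfrak m^2$, the terms $b_i(m-2)h/x_i$ have degree $\ge \de+1$. Hence in degree $\de$ we get $(1-\de/m)h\in J(f)_\de$. But $h\ne0$ in $M_f$ (it is the socle generator), and $1-\de/m\neq0$ because $\de\ne m$. This is a contradiction, so $g\notin J(g)$. Therefore $\{g=0\}$ is not quasihomogeneous and not isomorphic to $\{f=0\}$, and $f$ is not $(\de-1)$-determined. Combined with Theorem \ref{thm:main}, this gives $D(n,m)=n(m-2)+1$.

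The main obstacle is the degree bookkeeping in the second step. One has to check that the higher-order parts of the $a_i$ cannot cancel $h$ in degree $\de$: a degree-$(\de-m+1)$ part of $b_i$ multiplies $mx_i^{m-1}$ and lands in $J(f)$, while anything multiplying $h/x_i$ lands in degree $>\de$. One must also use the uniqueness of the linear part, which needs $m\ge3$ (Koszul relations of degree $m-1\ge2$). If this turns out delicate, I would fall back on comparing Milnor and Tyurina numbers directly: $\mu_g=\mu_f=(m-1)^n$, since $g$ is semi-quasihomogeneous, and $\tau_g<\mu_g$ would be exhibited via $g\notin J(g)$ as above.
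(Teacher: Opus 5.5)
Your proof is correct. It uses the same witness as the paper: the Fermat polynomial perturbed by the socle monomial $h=x_1^{m-2}\cdots x_n^{m-2}$. It also uses the same obstruction ($g\sim f$ with $f$ quasihomogeneous forces $g\in J(g)$) and the same Euler computation producing $(1-\delta/m)h$.

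The difference is in how you close the argument. The paper assumes $g_t\sim f$ for all $t\neq 0$, obtains $h\in J_t$, and then passes to the limit $h\in\lim_{t\to 0}J_t=J(f)$. That step tacitly uses that $\hol_n/J_t$ is a flat family (constant colength $(m-1)^n$), so that the limit ideal really is $J(f)$ and membership is a closed condition. You instead work with a single $g=f+h$ and compare homogeneous components:
\begin{itemize}
\item The $x_i^{m-1}$ have no syzygies with coefficients of degree $<m-1$; this is where $m\ge 3$ is used.
\item Hence the coefficients $a_i$ in $g=\sum a_i\,\partial g/\partial x_i$ must have zero constant part and linear part exactly the Euler field.
\item So the remainders $b_i$ lie in $\mathfrak m^2$, and the degree-$\delta$ component of $(1-\delta/m)h=\sum b_i\,\partial g/\partial x_i$ falls into $J(f)$.
\end{itemize}
This is more elementary and self-contained than the limit step. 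It also extends without change to the Remark (any regular $f$ perturbed by $\operatorname{Hess}(f)$), since the $\partial f/\partial x_i$ have no linear syzygies and $\operatorname{Hess}(f)\notin J(f)$.

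Two small corrections.
\begin{itemize}
\item State explicitly that the degree-$(m-1)$ comparison kills the constant parts of the $a_i$ before you look at degree $m$. When $\delta=m+1$, e.g.\ $(n,m)=(4,3)$ or $(2,5)$, the terms $a_i(0)\,(m-2)\,h/x_i$ would otherwise enter degree $m$.
\item Your parenthetical appeal to Theorem \ref{thm:main} for the isolatedness of $g$ does not work, since $g$ and $f$ differ in degree $\delta$. The leading-form argument you give first is fine. In any case isolatedness is not needed: you only use the easy direction of Saito's criterion, namely that $g=f\circ\phi$ gives $g\in\phi^*J(f)=J(g)$ by the Euler relation and the chain rule.
\end{itemize}
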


\begin{proof}  We argue by contradiction. Suppose that $f_{n,m}$ is ($n(m-2)-1$)--determined. Observe that, under our assumptions on $(n,m)$, one has that  $n(m-2)>m$.  Then consider the polynomial
$$
g_{n,m}(x_1,\ldots, x_n,t):=f_{n,m}+t x_1^{m-2} \cdots x_n^{m-2}.
$$
Since $f_{n,m}$ is $(n(m-2)-1)$--determined, then, for all $t\neq 0$, the germ of the hypersurface $g_{n,m}(x_1,\ldots, x_n,t)=0$ is analytically equivalent to the germ of $f_{n,m}(x_1,\ldots, x_n)\\
=0$. 

Therefore, by Saito's Theorem \ref {thm:saito}, $g_{n,m}(x_1,\ldots, x_n,t)$ belongs to the Jacobian ideal $J_t$ of $g_{n,m}(x_1,\ldots, x_n,t)$ as polynomial in $x_1,\ldots, x_n$.  Now $J_t$  is generated by
$$
\frac 1m \cdot \frac {\partial g_{n,m}}{\partial x_i}=x_i^{m-1}+\frac {t(m-2)}m 
x_1^{m-2} \cdots x_i^{m-3} \cdots x_n^{m-2}, \, \text{for}\, 1\leq i\leq n.
$$
Therefore the  polynomial
$$
\frac 1m \cdot \sum_{i=1}^n x_i\frac {\partial g_{n,m}}{\partial x_i}=f_{n,m}+\frac {t(m-2)n}m x_1^{m-2} \cdots x_n^{m-2}
$$
belongs to $J_t$. Then belongs to $J_t$ also the polynomial
$$
\frac 1m \cdot \sum_{i=1}^n x_i\frac {\partial g_{n,m}}{\partial x_i}-g_{n,m}=
t\Big (\frac {n(m-2)}m-1\Big)x_1^{m-2} \cdots x_n^{m-2}
$$
hence $x_1^{m-2} \cdots x_n^{m-2}$ belongs to $J_t$ for all $t\neq 0$, because 
$nm-2n-m\neq 0$ under our hypotheses.  Then $x_1^{m-2} \cdots x_n^{m-2}$ belongs to $\lim_{t\to 0} J_t=J(f_{n,m})=(x_1^{m-1}, \ldots, x_n^{m-1})$, and this  is a contradiction. 
\end{proof}

\begin{remark}
The expert reader will now be easily convinced that the same counterexample holds for every 
regular homogeneous polynomial $f$.

In fact,  by the cited  result of Scheja and Storch,  the Milnor algebra $M_f$ is a graded Gorenstein algebra whose socle, i.e., its graded piece
of maximal degree $\de_f = n (m-2)$, has dimension 1 and is generated by a the polynomial  $\varphi : = {\rm Hess}(f)$. 

If we take $ g_t :=f  + t \varphi$, then for general $t$ we have that $g_t$ is not analytically
equivalent to $f$.
\end{remark}

\section{Generalization to the quasi-homogeneous case}\label{sec:qh}

If $f$ is quasi-homogeneous of  isobaric type $(w_1,\ldots ,w_n; m)$,  with $ w_1,\ldots ,w_n \in \NN_+$,
then we say again that $f$ is \emph{regular} if the partial derivatives form a regular sequence.
Then  the Koszul complex is exact and 
the Milnor algebra $M_f$ is a Gorenstein algebra of  
 weighted degree  $\de_f$,
which can be calculated by the cited result of Scheja and Storch, as the degree of the Hessian $\varphi: = {\rm Hess}(f)$,
which equals 
 $$ \de_f = nm-2 \sum_{j=1}^n w_j.$$

 With  the same proof, we have  then the following:

\begin{theorem}
Let $f$ be quasi-homogeneous polynomial of  isobaric type $(w_1,\ldots ,w_n; m)$.
Then $f$ is $ (nm-2 \sum_{j=1}^n w_j)$-determined.
\end{theorem}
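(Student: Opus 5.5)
The plan is to repeat the proof of Theorem \ref{thm:main} with the standard grading replaced by the weighted grading. I read the statement with the standing hypothesis that $f$ is regular, i.e.\ the partials form a regular sequence, as in the paragraph preceding it. Set $k:=\de_f=nm-2\sum_j w_j$. I would verify the hypothesis $\mathfrak m^{k+1}\subseteq \mathfrak m^2\cdot J(f)$ of Theorem \ref{thm:fdt}, with $\mathfrak m$ the ordinary maximal ideal. Since $f$ and its partials are polynomials, this is a statement about polynomial ideals and can be checked monomial by monomial.

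\textbf{Step 1 (vanishing of the Milnor algebra above $\de_f$).} Give $x_j$ weight $w_j$, so that $\partial f/\partial x_i$ is weighted homogeneous of weight $m-w_i$. By regularity the Koszul complex on the partials is exact and resolves $M_f$ as a graded module. By the Scheja--Storch result quoted before the statement, $M_f$ is Gorenstein with socle spanned by $\mathrm{Hess}(f)$, of weighted degree $\sum_i(m-2w_i)=\de_f$. Hence $(M_f)_e=0$ for every weight $e>\de_f$. Alternatively, the Hilbert series $\prod_i (1-t^{m-w_i})/(1-t^{w_i})$ is a polynomial of degree $\sum_i(m-2w_i)$, which gives the same conclusion without citing Scheja--Storch.

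\textbf{Step 2 (passing from ordinary to weighted degree).} Let $F$ be a monomial of ordinary degree $k+1$. Since all $w_j\geq 1$, its weight satisfies $w(F)\geq k+1>\de_f$, so $F\in J(f)$ by Step 1. Taking weighted-homogeneous components, we may write $F=\sum_i g_i\,\partial f/\partial x_i$ with each $g_i$ weighted homogeneous of weight $w(F)-m+w_i$.

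\textbf{Step 3 (upgrading to $\mathfrak m^2\cdot J(f)$).} This is the main obstacle. In Corollary \ref{determined} it was automatic, because there the coefficients had ordinary degree $(k-m)+2\geq 2$. Here a $g_i$ fails to lie in $\mathfrak m^2$ only if it contains a constant term or a linear term $x_j$. That can happen only when $w(F)-m+w_i$ equals $0$ or some $w_j$. Since $w(F)\geq \de_f+1$, this requires $\de_f+1\leq m+\max_j w_j-w_i$. So in the non-degenerate range, where $\de_f+1-m+w_i>\max_j w_j$ for all $i$ (the analogue of $n(m-2)-m\geq 0$ in Theorem \ref{thm:main}), we get every $g_i\in\mathfrak m^2$ and Theorem \ref{thm:fdt} applies directly.

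For the remaining finitely many small configurations, my first attempt would be to exploit the freedom in choosing the $g_i$. The problematic constant or linear coefficients $c\,x_j\,\partial f/\partial x_i$ can be altered by Koszul syzygies. Together with the Euler relation $m f=\sum_i w_i x_i\,\partial f/\partial x_i$, this should let me move such terms into $\mathfrak m^2 J(f)$, or show they cannot occur since $F\in\mathfrak m^{k+1}$ has large ordinary degree. Failing that, I would fall back on the weighted version of the finite determinacy theorem, i.e.\ $\mathfrak m^{k+1}\subseteq \mathfrak m\cdot J(f)$ suffices for quasi-homogeneous $f$ via the Euler vector field, as in \cite{GLS}. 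I would treat the low cases separately, as Corollary \ref{cor:trip} does in the homogeneous setting.
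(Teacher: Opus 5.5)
Your Steps 1 and 2 are exactly the paper's argument. The paper only says ``with the same proof'' as Theorem \ref{thm:main}: $(M_f)_e=0$ for weights $e>\de_f$, hence $\mathfrak m^{\de_f+1}\subseteq J(f)$, then Corollary \ref{determined}. In Step 3 you correctly isolate what that transfer skips. Corollary \ref{determined} needs the coefficients to lie in $\mathfrak m^2$, and in Theorem \ref{thm:main} this came from $n(m-2)\geq m$. Inside your non-degenerate range your argument is complete.

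The gap is your last paragraph. The leftover cases are neither finitely many nor provable, because the statement as written is false there.
\begin{itemize}
\item Take $f=x^2+y^{2r}$, $r\geq 2$, of type $(r,1;2r)$. Then $\de_f=2r-2$, and $f$ has the same $(2r-2)$-jet $x^2$ as $x^2+y^{2r+1}$. The latter has Milnor number $2r\neq 2r-1=\mu_f$, so $f$ is not $\de_f$-determined.
\item Likewise $x^2+y^3$ of type $(3,2;6)$ has $\de_f=2$ and is not $2$-determined.
\item The homogeneous cases $m=2$ and $(n,m)=(2,3)$, which Theorem \ref{thm:main} had to exclude, have $\de_f<m$, so the $\de_f$-jet of $f$ is $0$.
\end{itemize}
Hence neither of your fallbacks can work. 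In these examples $\mathfrak m^{\de_f+1}\not\subseteq\mathfrak m^2J(f)$; e.g.\ $y^3\notin\mathfrak m^2\cdot(x,y^3)$ for $f=x^2+y^4$. So no rearrangement by Koszul syzygies or the Euler relation helps. The criterion ``$\mathfrak m^{k+1}\subseteq\mathfrak m J(f)$ suffices'' is also false, even for quasi-homogeneous $f$. For $f=x^2+y^3$ one has $\mathfrak mJ(f)=(x^2,xy,y^3)\supseteq\mathfrak m^3$, yet $f$ is not $2$-determined. That inclusion is only a \emph{necessary} condition for $k$-determinacy.

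What is really missing is an extra hypothesis playing the role of $n(m-2)\geq m$. Your range is one such hypothesis, but it is needlessly narrow, since you used $w(F)\geq k+1$ rather than $w(F)\geq (k+1)\min_j w_j$. A cleaner version: if $(\de_f+1)\min_j w_j>m$, then $f$ is $\de_f$-determined.
\begin{itemize}
\item Any $h\in\mathfrak m^{\de_f+1}$ then has weighted order $d_0>\max(m,\de_f)$.
\item Its weight-$d_0$ part equals $\sum_i v_i\,\partial f/\partial x_i$, with $v_i$ weighted homogeneous of weight $d_0-m+w_i>w_i$.
\item So $x_i\mapsto x_i-v_i$ is an automorphism carrying $f+h$ to $f+h'$, with $h'$ of weighted order $>d_0$.
\item After finitely many such steps the error lies in $\mathfrak m^{\mu_f+2}$, and Corollary \ref{cor:milnor} finishes.
\end{itemize}
For $w_j\equiv 1$ this hypothesis is exactly $n(m-2)\geq m$.
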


{}
\end{document}